\newtheorem{thm}{Theorem}[section]
\newtheorem{lem}[thm]{Lemma}
\newtheorem{prop}[thm]{Proposition}
\newtheorem{defn}[thm]{Definition}
\numberwithin{equation}{section}
\newcommand{\Hom}{\operatorname{Hom}}
\newcommand{\cone}{\operatorname{cone}}
\newcommand{\K}{\operatorname{K}}
\newcommand{\id}{\operatorname{id}}
\title{A Note on the Grothendieck Group of an Additive Category}
\author{David E. V. Rose \\
{ \sl \small Mathematics Department,}\\
      { \sl \small Duke University, Durham, NC 27708-0320, USA}
         \\
      {\tt \small email: derose@math.duke.edu} \\}
\date{ }
\begin{document}

\maketitle

\begin{abstract}
There are two abelian groups which can naturally be associated to an additive category $\mathcal{A}$:
the split Grothendieck group of $\mathcal{A}$ and the triangulated Grothendieck group of the homotopy category 
of (bounded) complexes in $\mathcal{A}$. We prove that these groups are isomorphic. 
Along the way, we deduce that the `Euler characteristic' of a complex in $\mathcal{A}$ 
is invariant under homotopy equivalence.
\end{abstract}

\section{Introduction}
A categorification of an algebraic structure is typically given by an additive category (often possessing additional 
structure) from which the original structure can be recovered by taking the Grothendieck group; see for 
instance \cite{KhovanovMS} for the abelian case.
In certain categorifications of quantum knot invariants, the categorification is accomplished by first finding an 
additive category which categorifies an algebraic structure and then 
passing to the homotopy category of complexes to give the categorification of the knot invariant 
(see \cite{BarNatan2} and \cite{MorrisonNieh}).
The categorified knot invariant decategorifies to give the original knot invariant by taking the 
`Euler characteristic' of the complex, the alternating sum of terms of the complex, 
viewed as an element of the split Grothendieck group of the additive category. 
Since the homotopy category is triangulated, 
the natural decategorification of this category is its triangulated Grothendieck group. This posits the question, 
are these two Grothendieck groups isomorphic? This question can equivalently be stated: is the Euler characteristic 
of a complex invariant under homotopy equivalence?

We answer both these questions in the affirmative:

\begin{thm}\label{1}
Let $\mathcal{A}$ be an additive category and $K^b(\mathcal{A})$ denote the homotopy category of bounded complexes 
in $\mathcal{A}$. 
The split Grothendieck group of $\mathcal{A}$ is isormophic to the 
triangulated Grothendieck group of $K^b(\mathcal{A})$.
\end{thm}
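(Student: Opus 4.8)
The plan is to write down the evident maps in both directions and check that they are mutually inverse, the only substantial point being that the `Euler characteristic' $\tilde\chi(C) := \sum_i (-1)^i [C^i]$ of a bounded complex $C$, a priori an element of the split Grothendieck group $K_0^{\oplus}(\mathcal{A})$, depends only on the class of $C$ in the triangulated Grothendieck group $K_0^{\triangle}(K^b(\mathcal{A}))$. First I would define $\phi \colon K_0^{\oplus}(\mathcal{A}) \to K_0^{\triangle}(K^b(\mathcal{A}))$ by sending $[A]$ to the class of the complex consisting of $A$ in degree $0$; this respects the relation $[A\oplus B] = [A]+[B]$ because the degree-$0$ sequence $0 \to A \to A\oplus B \to B \to 0$ is termwise split and hence induces a distinguished triangle in $K^b(\mathcal{A})$. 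Going the other way I would define $\tilde\chi$ on objects of $K^b(\mathcal{A})$ as above and then show it descends to a homomorphism $\chi$ out of $K_0^{\triangle}(K^b(\mathcal{A}))$.

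The heart of the argument is the claim that $\tilde\chi(C) = 0$ whenever $C$ is a bounded contractible complex. I would prove this by a folding trick, deliberately avoiding any attempt to decompose $C$ into elementary two-term complexes $\cdots \to 0 \to A \xrightarrow{\id} A \to 0 \to \cdots$, since such a decomposition need not exist when idempotents in $\mathcal{A}$ do not split. Choose a contracting homotopy $h$ with $dh + hd = \id_C$; since $C$ is bounded, $h$ is nilpotent when viewed as a degree $-1$ endomorphism of $\bigoplus_n C^n$. Put $C^{+} = \bigoplus_{i \text{ even}} C^i$ and $C^{-} = \bigoplus_{i \text{ odd}} C^i$, both objects of $\mathcal{A}$, and let $\partial = d + h$, which restricts to maps $\alpha \colon C^{+} \to C^{-}$ and $\beta \colon C^{-} \to C^{+}$. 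Then $\partial^2 = d^2 + (dh+hd) + h^2 = \id + h^2$ with $h^2$ nilpotent, so $\beta\alpha$ and $\alpha\beta$ are both invertible; hence $\alpha$ has a left and a right inverse, so it is an isomorphism $C^{+} \cong C^{-}$, and therefore $\tilde\chi(C) = [C^{+}] - [C^{-}] = 0$. (Alternatively one may first replace $h$ by $hdh$, which squares to zero, so that $\partial^2 = \id$ on the nose.)

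With the claim in hand the rest is formal. If $f \colon X \to Y$ is a homotopy equivalence then $\cone(f)$ is contractible (standard), and since $\cone(f)^n = X^{n+1} \oplus Y^n$ one computes directly that $\tilde\chi(\cone f) = \tilde\chi(Y) - \tilde\chi(X)$; hence $\tilde\chi(X) = \tilde\chi(Y)$, so $\tilde\chi$ is constant on isomorphism classes in $K^b(\mathcal{A})$ and extends to a homomorphism on the free abelian group on those classes. (In particular this is the promised homotopy-invariance statement.) The same cone computation, together with the fact that every distinguished triangle in $K^b(\mathcal{A})$ is isomorphic to a mapping-cone triangle $X \xrightarrow{f} Y \to \cone f \to X[1]$, shows this homomorphism annihilates the triangulated relations, so it factors through a homomorphism $\chi \colon K_0^{\triangle}(K^b(\mathcal{A})) \to K_0^{\oplus}(\mathcal{A})$.

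Finally I would check that $\chi$ and $\phi$ are inverse to one another: $\chi(\phi[A]) = [A]$ is immediate, while $\phi(\chi[C]) = [C]$ follows by induction on the number of nonzero terms of $C$, using that the stupid truncation yields a termwise-split short exact sequence $0 \to C_{\geq j+1} \to C \to C_{\leq j} \to 0$, hence a distinguished triangle, and that $[A[1]] = -[A[0]]$ in $K_0^{\triangle}(K^b(\mathcal{A}))$, which comes from rotating the distinguished triangle $A \xrightarrow{\id} A \to 0 \to A[1]$. The one real obstacle is the contractibility claim of the second paragraph: the folding trick is precisely what makes it work in the absence of split idempotents, and everything surrounding it is bookkeeping with termwise-split sequences, mapping cones, and the truncation functors.
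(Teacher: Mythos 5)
Your proof is correct, and its overall skeleton agrees with the paper's: both directions are given by the obvious maps, the shift and stupid-truncation triangles show the degree-zero inclusions generate, the mapping-cone computation together with the fact that every distinguished triangle is isomorphic to a cone triangle reduces everything to the statement that a bounded contractible complex has vanishing Euler characteristic (the paper also proves in detail, as its Lemma 3.1, the fact you invoke as standard, that $\varphi$ is a homotopy equivalence iff $\mathrm{cone}(\varphi)$ is null-homotopic). Where you genuinely diverge is in the proof of that key vanishing statement (the paper's Proposition 3.3). The paper proves it by writing down explicit upper-triangular matrices $R$ and $L$ between $\bigoplus_i A^{2i}$ and $\bigoplus_i A^{2i+1}$, with off-diagonal entries given by products of the homotopies weighted by signed Catalan numbers, and verifying by hand that $RL=\mathrm{id}$ and $LR=\mathrm{id}$. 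You instead use the folding trick: with $dh+hd=\mathrm{id}$, the odd map $\partial=d+h$ exchanges the even and odd parts and satisfies $\partial^{2}=\mathrm{id}+h^{2}$ with $h^{2}$ nilpotent by boundedness, so both composites are invertible and the even-to-odd component is an isomorphism; your two-sided-inverse argument is valid in any additive category, and your parenthetical is also correct, since $(hdh)^{2}=h\bigl((dh)(hd)\bigr)h=h\bigl(dh-(dh)^{2}\bigr)h=0$ because $dh$ is idempotent. Both routes deliberately avoid splitting idempotents or decomposing into elementary two-term complexes, and both in the end exhibit the same isomorphism between the even and odd parts; yours is shorter and more conceptual, while the paper's closed-form matrices have the incidental benefit that their entrywise stable limit immediately gives the extension to bounded-below complexes in $K^{+}(\mathcal{A})$ mentioned at the end of the paper (your geometric-series inverse is locally finite and would adapt as well, but that requires a remark). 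So: correct, with a cleaner proof of the central proposition and otherwise the same architecture.
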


\begin{thm}\label{2}
Let $A^{\cdot} \simeq B^{\cdot}$ be homotopy equivalent complexes in $K^b(\mathcal{A})$, then
\[
\sum_{i= -\infty}^{\infty} (-1)^i \langle A^i \rangle 
= \sum_{i= -\infty}^{\infty} (-1)^i \langle B^i \rangle,
\]
where $\langle - \rangle$ denotes the corresponding 
element in the split Grothendieck group of $\mathcal{A}$.
\end{thm}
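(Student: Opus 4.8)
The plan is to reduce Theorem~\ref{2} to the statement that a bounded null-homotopic complex in $\mathcal{A}$ has vanishing alternating sum of terms in the split Grothendieck group, and then to prove that statement by a ``fold the complex onto its even and odd parts'' argument. Throughout, write $\chi(C^{\cdot}) = \sum_i (-1)^i\langle C^i\rangle$ and recall that $\langle-\rangle$ is additive on direct sums and that $\langle X\rangle = \langle Y\rangle$ as soon as $X\cong Y$ in $\mathcal{A}$.

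First I would reduce to the null-homotopic case via mapping cones. If $f\colon A^{\cdot}\to B^{\cdot}$ is a homotopy equivalence, then its mapping cone $\cone(f)$ is again a bounded complex in $\mathcal{A}$, and it is null-homotopic: this is a purely formal fact, valid over any additive category, expressing that the cone on an isomorphism in the triangulated category $K^b(\mathcal{A})$ is a zero object (it can also be checked by writing down an explicit contracting homotopy built from $f$, its homotopy inverse, and the two homotopies). Since $\cone(f)^i = A^{i+1}\oplus B^i$, additivity of $\langle-\rangle$ gives
\[
\chi\bigl(\cone(f)\bigr) = \Bigl(\sum_i (-1)^i\langle B^i\rangle\Bigr) - \Bigl(\sum_i (-1)^i\langle A^i\rangle\Bigr) = \chi(B^{\cdot}) - \chi(A^{\cdot}),
\]
so it suffices to prove $\chi(C^{\cdot}) = 0$ whenever $C^{\cdot}$ is a bounded null-homotopic complex in $\mathcal{A}$.

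For the null-homotopic case, fix a contracting homotopy $h = (h^i\colon C^i\to C^{i-1})$, so $d^{i-1}h^i + h^{i+1}d^i = \id_{C^i}$ for every $i$. Set $C^{\mathrm{ev}} = \bigoplus_{i\ \mathrm{even}} C^i$ and $C^{\mathrm{od}} = \bigoplus_{i\ \mathrm{odd}} C^i$ (finite sums, as $C^{\cdot}$ is bounded), and define $\Phi\colon C^{\mathrm{ev}}\to C^{\mathrm{od}}$ and $\Psi\colon C^{\mathrm{od}}\to C^{\mathrm{ev}}$ by letting each act on a summand $C^i$ via $d^i + h^i\colon C^i\to C^{i+1}\oplus C^{i-1}$. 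The key computation, using only $d\circ d = 0$ and the homotopy identity, is that $\Psi\circ\Phi$ restricted to the summand $C^i$ equals $\id_{C^i} + h^{i-1}h^i$, where $h^{i-1}h^i\colon C^i\to C^{i-2}$ strictly lowers degree; hence $\Psi\circ\Phi = \id_{C^{\mathrm{ev}}} + N$ with $N$ nilpotent (the complex is bounded), so $\Psi\circ\Phi$ is invertible, and symmetrically so is $\Phi\circ\Psi$. A map with a left and a right inverse is an isomorphism, so $\Phi\colon C^{\mathrm{ev}}\xrightarrow{\ \sim\ }C^{\mathrm{od}}$ in $\mathcal{A}$; therefore $\langle C^{\mathrm{ev}}\rangle = \langle C^{\mathrm{od}}\rangle$, which is exactly $\chi(C^{\cdot}) = 0$.

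I expect the only real subtlety to be a temptation one must resist: the maps $d^{i-1}h^i$ and $h^{i+1}d^i$ are complementary idempotents on $C^i$, and one is tempted to split each $C^i$ accordingly and recognize $C^{\cdot}$ as a direct sum of two-term elementary contractible complexes. That works, but only after either assuming $\mathcal{A}$ is idempotent complete or separately checking that $G_0^{\oplus}(\mathcal{A})$ injects into the split Grothendieck group of the Karoubi envelope. Bundling all even (resp. odd) terms together avoids this entirely, since the cross terms of $\Psi\Phi$ and $\Phi\Psi$ that are not forced to vanish by $d\circ d = 0$ are precisely the degree-decreasing compositions $h^{i-1}h^i$, which are nilpotent for boundedness reasons alone. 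The rest — the null-homotopy of $\cone(f)$ and the explicit matrix entries of $\Psi\Phi$ and $\Phi\Psi$ — is routine bookkeeping with indices and signs.
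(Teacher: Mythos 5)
Your proposal is correct, and while the overall reduction coincides with the paper's, the heart of the argument is carried out by a genuinely different (and slicker) device. Both you and the paper reduce Theorem \ref{2} to the vanishing of $\chi$ on a bounded null-homotopic complex, via the fact that $\cone(f)^{\cdot}$ is null-homotopic when $f$ is a homotopy equivalence (the paper proves this as Lemma \ref{lemma1} by an explicit contracting homotopy; your appeal to the standard fact that the cone of an isomorphism in the triangulated category $K^b(\mathcal{A})$ is zero is legitimate), together with the computation $\chi(\cone(f)^{\cdot}) = \chi(A_2^{\cdot}) - \chi(A_1^{\cdot})$; and both then prove the null-homotopic case by exhibiting an isomorphism between the even and odd parts. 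The difference is in how that isomorphism is produced. The paper writes down two explicit triangular matrices $R$ and $L$, whose entries are the differentials on the diagonal and products of homotopies weighted by signed Catalan numbers off the diagonal, and verifies $RL=\id$ and $LR=\id$ entry by entry. You instead use the single map $d+h$ in each direction and observe that each composite is $\id + N$ with $N$ strictly lowering degree by two, hence nilpotent by boundedness, so both composites are invertible and $\Phi$ is an isomorphism by the two-sided inverse argument. These are secretly the same isomorphism: the recursion $\alpha_k = -\sum_{j}\alpha_j \alpha_{k-1-j}$ defining the paper's coefficients is exactly what expanding $(\id+N)^{-1}$ as a finite geometric series produces, so the paper's matrices are the closed form of your inverse. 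Your route buys a much shorter verification — two lines of computation plus a standard nilpotence argument, with no coefficient pattern to guess — and your remark about sidestepping idempotent completeness by bundling the even and odd terms is apt (it is also how the paper avoids that issue). What the paper's explicit formulas buy is a concrete isomorphism whose stable limit is quoted verbatim for the extension to bounded-below complexes in $K^+(\mathcal{A})$; your argument extends there too, since $N$ applied repeatedly to any fixed summand eventually vanishes when the complex is bounded below, but one would need a brief remark about such column-finite sums that the explicit matrices make unnecessary.
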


We present the relevant background on additive categories and Grothendieck groups in Section \ref{background}. 
In Section \ref{lastsec} we prove Theorems \ref{1} and \ref{2} and discuss a slight generalization of 
Theorem \ref{2} which is used in \cite{Rose2}. 

\noindent \textbf{Acknowledgments:} I would like to thank Ezra Miller for a helpful conversation and 
Scott Morrison for useful correspondence. I would also like to thank my advisor Lenny Ng for his 
continued guidance. 
The author was partially supported by NSF grant DMS-0846346 during the completion of this work.

\section{Background}\label{background}
Let $\mathcal{A}$ be an additive category. Recall that this means that $\mathcal{A}$ has a zero object, 
finite biproducts, 
and that $\Hom_{\mathcal{A}}(A_1,A_2)$ is an abelian group for any objects $A_1,A_2$ in $\mathcal{A}$ with addition 
distributing over composition.
\begin{defn}\label{defn1}
The \emph{split Grothendieck group} of $\mathcal{A}$, denoted $\K_{\oplus}(\mathcal{A})$, is the abelian group 
generated by isomorphism classes $\langle A \rangle$ of objects in $\mathcal{A}$ modulo the relations 
$\langle A_1\oplus A_2 \rangle = \langle A_1 \rangle + \langle A_2 \rangle$ 
for all objects $A_1,A_2$ in $\mathcal{A}$.
\end{defn}
Recall that the Grothendieck group of an abelian category is the abelian group generated by isomorphism classes 
$\langle A \rangle$ of objects modulo the relations 
$\langle A_2 \rangle = \langle A_1 \rangle + \langle A_3 \rangle$ for every short exact sequence 
\[
0 \to A_1 \to A_2 \to A_3 \to 0
\] 
in $\mathcal{A}$.
We can think of Definition \ref{defn1} as the analog of this notion in an additive category where we impose relations 
corresponding to the only notion of exact sequence that makes sense, the split exact sequences
\[
0 \to A_1 \to A_1\oplus A_2 \to A_2 \to 0.
\]

Suppose now that $\mathcal{C}$ is not only additive, but triangulated.
\begin{defn}\label{defn2}
The \emph{triangulated Grothendieck group}, denoted $\K_{\triangle}(\mathcal{C})$, is the abelian group 
generated by isomorphism classes $\langle C \rangle$ of objects in $\mathcal{C}$ quotiented by the relation 
$\langle C_2 \rangle = \langle C_1 \rangle + \langle C_3 \rangle$ 
for all distinguished triangles 
$C_1\to C_2\to C_3$.
\end{defn} 
Again, we think of distinguished triangles as the analogs of short exact sequences in $\mathcal{C}$.

\section{Grothendieck Groups of Additive Categories}\label{lastsec}
Now fix an additive category $\mathcal{A}$. Let $K^b(\mathcal{A})$ denote the homotopy category of bounded 
complexes in $\mathcal{A}$ (we apologize for the confusing, but somewhat standard, notation). 

Let $A^{\cdot} = \left(A^k \stackrel{d^k}{\longrightarrow} \cdots \stackrel{d^{l-1}}{\longrightarrow} A^l\right)$ 
be a bounded complex and 
let $A[m]^{\cdot}$ denote the complex shifted up by $m$ in homological degree. We will underline 
the term in homological degree zero when it is not clear from the context.
The distinguished triangle 
\[
A^{\cdot} \to 0 \to A[-1]^{\cdot}
\]
gives that 
\begin{equation}\label{rel1}
\langle A[-1]^{\cdot} \rangle = -\langle A^{\cdot} \rangle
\end{equation}
and the triangle
\[
A^k \to
\left(\underline{A}^{k+1} \stackrel{d^{k+1}}{\longrightarrow} \cdots \stackrel{d^{l-1}}{\longrightarrow} A^l\right)\to
A[-k-1]^{\cdot}
\]
shows (via induction) that 
\begin{equation}\label{rel2}
\langle A^{\cdot} \rangle = \chi(A^{\cdot})  
\end{equation}
in $\K_{\triangle}(K^b(\mathcal{A}))$.
Here $\chi(A^{\cdot}) := \sum_{i=-\infty}^{\infty} (-1)^i \langle A^i \rangle$ and 
$A^i$ is shorthand for the complex with the object $A^i$ in degree zero and all other terms zero.
From this we see that $\K_{\triangle}(K^b(\mathcal{A}))$ and $\K_{\oplus}(\mathcal{A})$ are generated 
by the same elements. 

Given complexes $A_1^{\cdot}$ and $A_2^{\cdot}$, the distinguished triangle
\[
A_1^{\cdot} \to (A_1 \oplus A_2)^{\cdot}\to A_2^{\cdot}
\]
shows that 
\begin{equation}\label{rel3}
\langle (A_1\oplus A_2)^{\cdot}\rangle 
= \langle A_1^{\cdot} \rangle + \langle A_2^{\cdot} \rangle.
\end{equation}
It follows that there is a surjective map 
$\K_{\oplus}(\mathcal{A}) \to \K_{\triangle}(K^b(\mathcal{A}))$.

To prove Theorem \ref{1}, it suffices to show that this map is injective or equivalently that there 
are no additional relations imposed on $\K_{\triangle}(K^b(\mathcal{A}))$ other than those given in equations 
\eqref{rel1}, \eqref{rel2}, and \eqref{rel3}. 
Given a map $A_1 \stackrel{f}{\rightarrow} A_2$, these equations show that 
\begin{align}
\langle \cone(f)^{\cdot} \rangle &= 
\displaystyle \sum_{j=\infty}^{\infty} \Big( (-1)^j \langle A_2^j \rangle 
+ (-1)^{j+1} \langle A_1^j \rangle \Big) \label{eq1} \\
&= \langle A_2^{\cdot} \rangle - \langle A_1^{\cdot} \rangle \notag
\end{align}
so distinguished triangles of the form 
\begin{equation}\label{st}
A_1^{\cdot} \stackrel{f}{\rightarrow} A_2^{\cdot} \to \cone(f)^{\cdot}
\end{equation}
contribute no new relations. 
Since all distinguished triangles are isomorphic to those of the form \eqref{st} and isomorphism in $K^b(\mathcal{A})$ 
is homotopy equivalence, it suffices to prove Theorem \ref{2}.

To this end, suppose that $\varphi:A_1^{\cdot} \to A_2^{\cdot}$ is a homotopy equivalence. The following result 
from \cite{Spanier} is given in the setting of the category of abelian groups, 
but the proof sketched there carries over to arbitrary 
additive categories. We provide the details of the proof for completeness.
\begin{lem}\label{lemma1}
A chain map $\varphi:A_1^{\cdot} \to A_2^{\cdot}$ is a homotopy equivalence iff $\cone(\varphi)^{\cdot}$ is 
null-homotopic.
\end{lem}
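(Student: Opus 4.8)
The plan is to prove both directions of the biconditional, exploiting the standard mapping cone machinery adapted to an arbitrary additive category.

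For the forward direction, suppose $\varphi:A_1^{\cdot} \to A_2^{\cdot}$ is a homotopy equivalence with homotopy inverse $\psi:A_2^{\cdot} \to A_1^{\cdot}$, so that $\psi\varphi - \id_{A_1^{\cdot}} = d h + h d$ and $\varphi\psi - \id_{A_2^{\cdot}} = d k + k d$ for chain homotopies $h,k$. I would write out the differential on $\cone(\varphi)^{\cdot}$ explicitly in the block form $\begin{pmatrix} -d_{A_1} & 0 \\ \varphi & d_{A_2} \end{pmatrix}$ (acting on $A_1[1]^{\cdot} \oplus A_2^{\cdot}$), and then exhibit an explicit contracting homotopy built out of $\psi$, $h$, and $k$ in block form. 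The standard choice is something like $\begin{pmatrix} -h & \psi \\ \cdot & k \end{pmatrix}$ with an appropriate correction term in the lower-left entry (typically involving $k\varphi h$ or $\varphi h$) so that the homotopy relation closes up; I would verify the four block equations that say this matrix is a contracting homotopy for the cone differential. The point is that all of these are just equalities between morphisms in $\mathcal{A}$ built by composition and addition, so nothing beyond the additive-category axioms is used.

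For the reverse direction, suppose $\cone(\varphi)^{\cdot}$ is null-homotopic, i.e. there is a degree $-1$ map $s$ on $\cone(\varphi)^{\cdot}$ with $ds + sd = \id$. Writing $s$ in block form with entries $s = \begin{pmatrix} \alpha & \beta \\ \gamma & \delta \end{pmatrix}$ (where $\beta: A_2^{\cdot} \to A_1[1]^{\cdot}$ shifts degree appropriately, etc.), I would expand the matrix identity $ds + sd = \id$ into its four component equations. The off-diagonal and diagonal relations should show, after relabeling, that the component of $s$ mapping $A_2$ back to $A_1$ serves as a homotopy inverse $\psi := \beta$ (up to sign/shift conventions) to $\varphi$, with the remaining entries $\alpha, \delta$ furnishing the two chain homotopies witnessing $\psi\varphi \simeq \id$ and $\varphi\psi \simeq \id$. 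So the contraction data for the cone unpacks exactly into homotopy-equivalence data.

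The main obstacle I anticipate is purely bookkeeping: getting the signs and the homological-degree shifts in the cone differential and in the block entries of the (contracting) homotopy consistent, so that the block equations actually close up rather than closing up "off by a sign." This is where the underlining convention for degree zero and the definition of $A[m]^{\cdot}$ matter. Once the conventions are pinned down, each direction is a finite check of identities among morphisms, valid in any additive category since only $+$ and $\circ$ are involved — which is precisely why the argument sketched in \cite{Spanier} for abelian groups transfers verbatim. I would present the forward direction in full block-matrix detail and then note that the reverse direction is obtained by reading the same equations backwards.
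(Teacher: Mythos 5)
Your overall strategy is the paper's: exhibit an explicit block contracting homotopy on $\cone(\varphi)^{\cdot}$ for the forward direction, and unpack the block equations of a contraction for the converse. The converse as you describe it is fine and matches the paper (the upper-right block of the contraction is, up to sign, a chain map and serves as the homotopy inverse, with the diagonal blocks supplying the two homotopies). But the forward direction, which is the real content of the lemma, has a genuine gap: the ansatz you propose does not close up, and the difficulty is not "purely bookkeeping" of signs. With your stated conventions ($\psi\varphi-\id=dh+hd$, $\varphi\psi-\id=dk+kd$, cone differential $\begin{pmatrix} -d_1 & 0 \\ \varphi & d_2 \end{pmatrix}$), the candidate $\begin{pmatrix} -h & \psi \\ c & k \end{pmatrix}$ gives diagonal blocks $2\psi\varphi-\id$ and $2\varphi\psi-\id$ rather than $\id$; that much is a fixable sign (take $h$ and $-k$ on the diagonal). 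The substantive problem is the remaining corner equation, which then reads $d_2c-cd_1=k\varphi-\varphi h$, and no simple correction "involving $k\varphi h$ or $\varphi h$" satisfies it for arbitrary, unrelated homotopies $h,k$: for instance $c=k\varphi h$ leaves the error term $\varphi\psi\varphi h-k\varphi\psi\varphi$, and $c=k(k\varphi-\varphi h)$ leaves $\varphi\psi u-2u$ with $u=k\varphi-\varphi h$. Because $h$ and $k$ are a priori incompatible, a correction confined to one off-diagonal entry is not enough.

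The paper handles exactly this point by correcting two entries simultaneously: its contraction has $(1,1)$-entry $H_1+\psi H_2\varphi-\psi\varphi H_1$ (the added term is annihilated by $d_1(\cdot)+(\cdot)d_1$, so it does not disturb the diagonal equation but feeds into the corner equation) together with lower-left entry $H_2H_2\varphi-H_2\varphi H_1$, and then verifies all four block identities. An alternative route, which you could take instead, is to first replace $k$ by a homotopy compatible with $h$ (so that $\varphi h=k\varphi$), after which the naive matrix with zero corner works; but you neither do this nor supply the two-entry correction, and "I would verify the four block equations" defers precisely the computation that constitutes the proof. To repair your write-up, either carry out the paper's verification with its corrected matrix, or prove the preliminary compatibility statement about the homotopies before writing down the contraction.
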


\begin{proof}
Let $\varphi:A_1^{\cdot}\to A_2^{\cdot}$ be a homotopy equivalence, so there exists a chain map 
$\psi:A_2^{\cdot}\to A_1^{\cdot}$ so that
\[
\varphi^j \psi^j - id_2^j = d_2^{j-1} H_2^j + H_2^{j+1} d_2^j
\]
and
\[
\psi^j \varphi^j - id_1^j = d_1^{j-1} H_1^j + H_1^{j+1} d_1^j
\]
for maps $H_1^j: A_1^j \to A_1^{j-1}$ and $H_2^j: A_2^j \to A_2^{j-1}$. We now construct maps 
$H^j:\cone(\varphi)^j \to \cone(\varphi)^{j-1}$ so that $id_{\cone(\varphi)}^j = d^{j-1}H^j + H^{j+1}d^j$ where 
\[
d^j = \begin{pmatrix} -d_1^{j+1} & 0 \\
-\varphi^{j+1} & d_2^j \end{pmatrix}.
\] 
Let 
\[
H^j = \begin{pmatrix} H_1^{j+1} + \psi^j H_2^{j+1}\varphi^{j+1} - \psi^j \varphi^j H_1^{j+1} & -\psi^j \\
H_2^j H_2^{j+1} \varphi^{j+1} - H_2^j \varphi^j H_1^{j+1} & -H_2^j \end{pmatrix}
\]
and denote $M^j = d^{j-1}H^j - H^{j+1} d^j$. We now compute the entries $M_{(kl)}^j$ of this matrix:
\begin{align*}
M_{(11)}^j &= -d^j H_1^{j+1} - d_1^j \psi^j H_2^{j+1} \varphi^{j+1} + d_1^j \psi^j \varphi^j H_1^{j+1} - H_1^{j+2}d_1^{j+1} \\
         &  \ \ \ \ \ - \psi^{j+1} H_2^{j+2} \varphi^{j+2} d_1^{j+1} 
         + \psi^{j+1} \varphi^{j+1} H_1^{j+2} d_1^{j+1} + \psi^{j+1} \varphi^{j+1}    \\ 
         &= id_1^{j+1} - \psi^{j+1} \varphi^{j+1} - d_1^j \psi^j H_2^{j+1} \varphi^{j+1} + d_1^j \psi^j \varphi^j H_1^{j+1}  \\
         &  \ \ \ \ \ - \psi^{j+1} H_2^{j+2} \varphi^{j+2} d_1^{j+1} 
         + \psi^{j+1} \varphi^{j+1} H_1^{j+2} d_1^{j+1} + \psi^{j+1} \varphi^{j+1}    \\
         &= id_1^{j+1} - \psi^{j+1}(d_2^j H_2^{j+1} + H_2^{j+2} d_2^{j+1})\varphi^{j+1} \\
         &  \ \ \ \ \ + \psi^{j+1} \varphi^{j+1}(d_1^j H_1^{j+1} + H_1^{j+2} d_1^{j+1}) \\
         &= id_1^{j+1} - \psi^{j+1}(\varphi^{j+1} \psi^{j+1} - id_2^{j+1})\varphi^{j+1} \\
         &  \ \ \ \ \ + \psi^{j+1} \varphi^{j+1}(\psi^{j+1} \varphi^{j+1} - id_1^{j+1}) \\
         &= id_1^{j+1},
\end{align*}

\[
M_{(12)}^j = d_1^j\psi^j - \psi^{j+1}d_2^j = 0,
\]

\begin{align*}
M_{(21)}^j &= -\varphi^j H_1^{j+1} - \varphi^j \psi^j H_2^{j+1}\varphi^{j+1} + \varphi^j \psi^j \varphi^j H_1^{j+1} 
           + d_2^{j-1} H_2^j H_2^{j+1} \varphi^{j+1} \\
         &= -d_2^{j-1} H_2^j \varphi^j H_1^{j+1} - H_2^{j+1} H_2^{j+2} \varphi^{j+2} d_1^{j+1} \\
         & \ \ \ \ \ + H_2^{j+1} \varphi^{j+1} H_1^{j+2} d_1^{j+1} + H_2^{j+1} \varphi^{j+1} \\
         &= (id_2^j + d_2^{j-1}H_2^j - \varphi^j \psi^j)H_2^{j+1}\varphi^{j+1} 
           + (\varphi^j \psi^j - d_2^{j-1}H_2^j - id_2^j)\varphi^j H_1^{j+1} \\
         & \ \ \ \ \ - H_2^{j+1} H_2^{j+2} \varphi^{j+2} d_1^{j+1} + H_2^{j+1}\varphi^{j+1} H_1^{j+2} d_1^{j+1} \\
         &= -H_2^{j+1} d_2^j H_2^{j+1}\varphi^{j+1} + H_2^{j+1} d_2^j \varphi^j H_1^{j+1} \\
         & \ \ \ \ \ - H_2^{j+1} H_2^{j+2} \varphi^{j+2} d_1^{j+1} + H_2^{j+1}\varphi^{j+1} H_1^{j+2} d_1^{j+1} \\ 
         &= -H_2^{j+1}(d_2^j H_2^{j+1}+H_2^{j+2} d_2^{j+1})\varphi^{j+1} \\
         & \ \ \ \ \ + H_2^{j+1}\varphi^{j+1}(d_1^jH_1^{j+1}+H_1^{j+2}d_1^{j+1}) \\
         &= -H_2^{j+1}(\varphi^{j+1}\psi^{j+1}-id_2^{j+1})\varphi^{j+1} + H_2^{j+1}\varphi^{j+1}(\psi^{j+1}\varphi^{j+1}-id_1^j) \\
         &= 0,
\end{align*}

\[
M_{(22)}^j = \varphi^j\psi^j - d_2^{j-1}H_2^j - H_2^{j+1}d_2^j = id_2^j.
\]
This shows that $\cone(\varphi)^{\cdot} \simeq 0$.

Conversely, let $\cone(\varphi)^{\cdot}$ be null-homotopic then
\[
id_{\cone(\varphi)}^j = \begin{pmatrix} -d_1^j & 0 \\ -\varphi^j & d_2^{j-1} \end{pmatrix}
\begin{pmatrix} h_{11}^{j+1} & h_{12}^j \\ h_{21}^{j+1} & h_{22}^j \end{pmatrix} 
+ \begin{pmatrix} h_{11}^{j+2} & h_{12}^{j+1} \\ h_{21}^{j+2} & h_{22}^{j+1} \end{pmatrix}
\begin{pmatrix} -d_1^{j+1} & 0 \\ -\varphi^{j+1} & d_2^j \end{pmatrix}
\]
which gives the equations
\[
-d_1^j h_{12}^j + h_{12}^{j+1}d_2^j=0,
\]
\[
id_1^{j+1} = -d_1^{j}h_{11}^{j+1} - h_{11}^{j+2}d_1^{j+1} - h_{12}^{j+1}\varphi^{j+1},
\]
and
\[
id_2^j = -\varphi^j h_{12}^j + d_2^{j-1}h_{22}^j + h_{22}^{j+1} d_2^j
\]
for maps $h_{12}^j:A_2^j \to A_1^j$, $h_{21}^j:A_1^j \to A_2^{j-2}$, 
$h_{11}^j:A_1^j \to A_1^{j-1}$, and $h_{22}^j:A_2^j \to A_2^{j-1}$. 
This shows that 
$\varphi$ is a homotopy equivalence with inverse the (chain!) map $-h_{12}^{\cdot}$.
\end{proof}

Still assuming $\varphi$ is a homotopy equivalence, consider the distinguished triangle 
\[
A_1^{\cdot} \stackrel{\varphi}{\rightarrow} A_2^{\cdot} \to \cone(\varphi)^{\cdot}.
\]
Lemma \ref{lemma1} gives that $\cone(\varphi) \simeq 0$ so equation \eqref{eq1} shows that
Theorem \ref{2} (and hence Theorem \ref{1}) follows from the next result.

\begin{prop}\label{3}
Let $A^{\cdot}$ be a null-homotopic complex in $K^b(\mathcal{A})$, then $\chi(A^{\cdot})=0$ 
in $\K_{\oplus}(\mathcal{A})$.
\end{prop}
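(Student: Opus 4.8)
The plan is to prove Proposition \ref{3} without attempting to decompose the complex $A^{\cdot}$ itself as a direct sum of two-term contractible pieces: that naive strategy would require splitting the idempotents $h^{i+1}d^i \colon A^i \to A^i$ coming from a contracting homotopy, and idempotents need not split in an arbitrary additive category. Instead I would produce a single isomorphism in $\mathcal{A}$ between the biproduct of the even-degree terms and the biproduct of the odd-degree terms; this immediately forces $\chi(A^{\cdot}) = 0$ in $\K_{\oplus}(\mathcal{A})$.

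Concretely, fix a contracting homotopy $h^i \colon A^i \to A^{i-1}$, so that $d^{i-1} h^i + h^{i+1} d^i = \id_{A^i}$ for every $i$. Because $A^{\cdot}$ is bounded, $A^{\mathrm{even}} := \bigoplus_{i \ \mathrm{even}} A^i$ and $A^{\mathrm{odd}} := \bigoplus_{i \ \mathrm{odd}} A^i$ are genuine objects of $\mathcal{A}$. Define $D \colon A^{\mathrm{even}} \to A^{\mathrm{odd}}$ by the matrix whose only nonzero components out of the summand $A^i$ are $d^i \colon A^i \to A^{i+1}$ and $h^i \colon A^i \to A^{i-1}$, and define $D' \colon A^{\mathrm{odd}} \to A^{\mathrm{even}}$ by exactly the same recipe. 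A short computation of the composites $D'D$ and $DD'$ (using $d d = 0$ and the homotopy identity) shows that the diagonal block $A^i \to A^i$ equals $d^{i-1} h^i + h^{i+1} d^i = \id_{A^i}$, the "degree $+2$" blocks vanish since they are of the form $dd$, and the only surviving off-diagonal blocks are the "degree $-2$" blocks $A^i \to A^{i-2}$ given by $h^{i-1} h^i$. Hence $D'D = \id + N$ and $DD' = \id + N'$ where $N, N'$ strictly lower homological degree by $2$; boundedness of $A^{\cdot}$ makes $N$ and $N'$ nilpotent, so $\id + N$ and $\id + N'$ are invertible (finite Neumann series). It then follows formally that $D$ is itself an isomorphism — it has left inverse $(D'D)^{-1} D'$ and right inverse $D' (DD')^{-1}$ — so $A^{\mathrm{even}} \cong A^{\mathrm{odd}}$ in $\mathcal{A}$, whence $\sum_{i \ \mathrm{even}} \langle A^i \rangle = \sum_{i \ \mathrm{odd}} \langle A^i \rangle$ in $\K_{\oplus}(\mathcal{A})$, i.e. $\chi(A^{\cdot}) = 0$.

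The one genuinely delicate point, and the reason the proof must be arranged this way, is that we must never split an idempotent: all we extract from the homotopy is the bookkeeping map $D$ together with the two invertible composites, and invertibility is purely a consequence of the error terms $N, N'$ being nilpotent. This is exactly where the hypothesis that the complex is \emph{bounded} is indispensable; without it neither $A^{\mathrm{even}}$, $A^{\mathrm{odd}}$ need exist nor need $N, N'$ be nilpotent. (This also foreshadows the generalization mentioned in the introduction: the same argument applies verbatim to any complex whose contracting homotopy has a "locally nilpotent" defect, e.g. bounded-below or filtered situations.)
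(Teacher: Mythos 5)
Your argument is correct, and it reduces the statement to the same basic fact as the paper does — an isomorphism $\bigoplus_{i\ \mathrm{even}} A^i \cong \bigoplus_{i\ \mathrm{odd}} A^i$ built from the differential and the contracting homotopy, with no idempotent-splitting and with boundedness used in an essential way — but the way you produce the isomorphism is genuinely different. The paper writes down two explicit matrices $R$ and $L$ (diagonal entries $d^{2i}$, resp.\ $h^{2i+1}$, plus upper-triangular correction terms which are products of homotopies weighted by signed Catalan numbers $\alpha_k$) and verifies $RL=\id$ and $LR=\id$ by hand, using auxiliary identities of the form $h^j\cdots h^{j+2l+1} = d^{j-2}h^{j-1}\cdots h^{j+2l+1} + h^j\cdots h^{j+2l+2}d^{j+2l+1}$ derived from the homotopy relations. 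You instead take the single uncorrected map $D = d + h$ in both directions and observe that $D'D$ and $DD'$ are the identity plus a strictly degree-lowering (hence nilpotent, by boundedness) error $hh$, so both composites are invertible by a finite Neumann series and $D$ is an isomorphism because it admits a left inverse $(D'D)^{-1}D'$ and a right inverse $D'(DD')^{-1}$. Your route is shorter and avoids all the combinatorics — morally, the Neumann series is where the paper's signed Catalan coefficients come from, so you are trading closed-form inverse matrices for a formal unipotence argument. What the paper's explicit $R$ and $L$ buy is the statement about $K^+(\mathcal{A})$ at the end of the section, where the stable limit of the matrices is invoked; as you note, your argument also survives there because the error term is locally nilpotent and the series is column-finite, provided the relevant countable coproducts exist in $\mathcal{A}$.
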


\begin{proof}
We may assume that 
\[
A^{\cdot} = A^0 \stackrel{d^0}{\longrightarrow} A^1 \stackrel{d^1}{\longrightarrow} \cdots 
\stackrel{d^{2k}}{\longrightarrow} A^{2k+1}
\]
contains all of the non-zero terms of $A^{\cdot}$. It suffices to show that
\[
\bigoplus_{i=0}^k A^{2i} \cong \bigoplus_{i=0}^k A^{2i+1}
\]
which we shall do by explicitly writing down the matrices giving the isomorphism.

Since $A^{\cdot}$ is null-homotopic there exist maps 
$A^j \stackrel{h^j}{\rightarrow} A^{j-1}$ so that 
\begin{equation*}
\id_j = d^{j-1}h^j+h^{j+1}d^j.
\end{equation*}
Using these equations, we can deduce the relations
\begin{equation*}
h^jh^{j+1}\cdots h^{j+2l+1} = 
d^{j-2}h^{j-1}h^j\cdots h^{j+2l+1} + h^j\cdots h^{j+2l+1}h^{j+2l+2}d^{j+2l+1}.
\end{equation*}
For instance, we can compute
\begin{align*}
h^j h^{j+1} &= h^j \id_j h^{j+1} \\
&= h^jd^{j-1}h^jh^{j+1} + h^jh^{j+1}d^jh^{j+1} \\
&= h^j h^{j+1} - d^{j-2}h^{j-1}h^jh^{j+1} + h^jh^{j+1} - h^jh^{j+1}h^{j+2}d^{j+1}
\end{align*}
and
\begin{align*}
h^j h^{j+1} h^{j+2} h^{j+3} &= h^j h^{j+1}\id_{j+1} h^{j+2} h^{j+3} \\
&= h^j h^{j+1}d^{j}h^{j+1} h^{j+2} h^{j+3} + h^j h^{j+1}h^{j+2}d^{j+1} h^{j+2} h^{j+3} \\
&= h^j h^{j+1} h^{j+2} h^{j+3} - h^j d^{j-1}h^{j}h^{j+1} h^{j+2} h^{j+3} \\
& \ \ \ \ \ + h^j h^{j+1} h^{j+2} h^{j+3} - h^j h^{j+1}h^{j+2}h^{j+3} d^{j+2} h^{j+3} \\
&= d^{j-2}h^{j-1}h^{j}h^{j+1} h^{j+2} h^{j+3} + h^j h^{j+1}h^{j+2}h^{j+3} h^{j+4} d^{j+3}.
\end{align*}
and similar computations (or induction on $l$) show the result in general.

Consider now the maps 
\[
R:\bigoplus_{i=0}^k A^{2i} \to \bigoplus_{i=0}^k A^{2i+1}
\]
and 
\[
L:\bigoplus_{i=0}^k A^{2i+1} \to \bigoplus_{i=0}^k A^{2i}
\]
given by 
\[
R=\begin{pmatrix}
d^0 &\alpha_0 h^2 &\alpha_1 h^2h^3h^4 &\alpha_2 h^2\cdots h^6 &\cdots &\alpha_{k-1}h^2\cdots h^{2k} \\
0 & d^2 &\alpha_0 h^4 &\alpha_1h^4h^5h^6 &\cdots & \alpha_{k-2}h^4\cdots h^{2k} \\
0 &0 &d^4 & \alpha_0 h^6 & \cdots & \alpha_{k-3}h^6\cdots h^{2k} \\
0 &0 &0 &d^6 &\cdots & \alpha_{k-4}h^8\cdots h^{2k} \\
\vdots &\vdots &\vdots &\vdots &\ddots &\vdots \\
0 &0 &0 &0 &\cdots &d^{2k} 
\end{pmatrix}
\]
and
\[
L=\begin{pmatrix}
\alpha_0 h^1 & \alpha_1 h^1h^2h^3 & \alpha_2 h^1\cdots h^5 &\alpha_3 h^1 \cdots h^7 
& \cdots & \alpha_k h^1\cdots h^{2k+1}\\
d^1 &\alpha_0 h^3 & \alpha_1 h^3h^4h^5 &\alpha_2 h^3\cdots h^7 &\cdots & \alpha_{k-1} h^3\cdots h^{2k+1} \\
0 & d^3 &\alpha_0 h^5 &\alpha_1 h^5h^6h^7 &\cdots &\alpha_{k-2} h^5\cdots h^{2k+1} \\
0 &0 &d^5 &\alpha_0 h^7 &\cdots &\alpha_{k-3} h^7\cdots h^{2k+1} \\
\vdots &\vdots &\vdots &\vdots &\ddots &\vdots \\
0 &0 &0 &0 &\cdots &\alpha_0 h^{2k+1}
\end{pmatrix}
\]
where $\{\alpha_k\}$ are integers defined by the recursion $\alpha_0=1$, $\alpha_1 = -1$, and 
\[
\alpha_k = - \sum_{j=0}^{k-1}\alpha_j\alpha_{k-1-j}.
\]
It is easy to see that in fact $\alpha_k = (-1)^k c_k$ where $c_k$ is the $k^{th}$ Catalan number. 

We now compute the entries of the matrices $RL$ and $LR$.
For $i<j$ we have
\begin{align*}
(RL)_{ij} 
&= \alpha_{j-i}d^{2i-2}h^{2i-1}\cdots h^{2j-1} + \alpha_0 \alpha_{j-i-1}h^{2i}\cdots h^{2j-1} + \cdots \\
& \ \ \ \ \ + \alpha_{j-i-1}\alpha_0 h^{2i} \cdots h^{2j-1} + \alpha_{j-i}h^{2i} \cdots h^{2j} d^{2j-1} \\
&= \alpha_{j-i}(d^{2i-2}h^{2i-1}\cdots h^{2j-1} + h^{2i} \cdots h^{2j} d^{2j-1} - h^{2i}\cdots h^{2j-1}) \\
&= 0
\end{align*}
and $(RL)_{ij}=0$ for $i>j$. We also compute 
\[
(RL)_{jj} =\alpha_0(d^{2j-2}h^{2j-1} + h^{2j}d^{2j-1}) = id_{2j-1}
\]
which shows that $RL = id$. Similarly, for $i<j$ we have
\begin{align*}
(LR)_{ij} 
&= \alpha_{j-i} d^{2i-3}h^{2i-2}\cdots h^{2j-2} + \alpha_0 \alpha_{j-i-1} h^{2i-1} \cdots h^{2j-2} + \cdots \\
& \ \ \ \ \ + \alpha_{j-i-1} \alpha_0 h^{2i-1} \cdots h^{2j-2} + \alpha_{j-i} h^{2i-1} \cdots h^{2j-1} d^{2j-2} \\
&= \alpha_{j-i} (d^{2i-3}h^{2i-2}\cdots h^{2j-2} + h^{2i-1} \cdots h^{2j-1} d^{2j-2} + h^{2i-1} \cdots h^{2j-2}) \\
&= 0
\end{align*}
and $(LR)_{ij}=0$ for $i>j$. We also see that 
\[
(LR)_{jj} = \alpha_0 (d^{2j-3}h^{2j-2} + h^{2j-1}d^{2j-2}) = id_{2j-2}
\]
so $LR = id$.
\end{proof}

We can slightly extend Proposition \ref{3} to the category $K^+(\mathcal{A})$ of bounded below complexes in 
$\mathcal{A}$. If $A^{\cdot}$ is such a complex and is null-homotopic, the infinite stable limit as 
$k\to \infty$ of the matrices $R$ and $L$ gives an isomorphism 
\[
\coprod_{i=-\infty}^{\infty} A^{2i} \cong \coprod_{i=-\infty}^{\infty} A^{2i+1}.
\]
If the category $\mathcal{A}$ is such that we can define a notion of Euler characteristic, this shows that 
null-homotopic complexes have zero Euler characteristic. In particular, this fact is used in \cite{Rose2}.

\newpage
\bibliographystyle{amsplain}
\bibliography{bib}

\end{document}